\title{Convergence of the Zagier type series for the Cauchy kernel}
\author{Nina Sakharova\thanks{The article was prepared within the framework of a subsidy granted to the HSE by the Government of the Russian Federation for the implementation of the Global Competitiveness Program.  }}
\date{}
\newtheorem{theorem}{Theorem}
\newtheorem{lemma}{Lemma}
\newtheorem{corollary}{Corollary}
\theoremstyle{definition}
\theoremstyle{remark}
\newtheorem{remark}{Remark}
\renewcommand{\pmod}[1]{(\textmd{mod}\hspace{.5mm}#1)}
\begin{document}

\maketitle

\begin{center}
\large{ \textit{Department of mathematics, National Research University Higher School of
Economics, Vavilova str. 7, 117312, Moscow, Russia.\\
saharnina@gmail.com}}
\end{center}

\begin{abstract}
In 1975 prof. Don Zagier derived a preliminary formula for the trace of the Hecke operators acting on the space of cusp forms (\cite{5}, \cite{6}). Actually, it is an expression in terms of an integral over a fundamental domain of $SL_2(\mathbb{Z}).$ His theorem tells us that if $f$ is a cusp form of weight $k$, then we can identify the Peterson scalar product of $f$ and a certain series $\omega_m(z_1,\bar{z_2}, k)$ with the action of the Hecke operator $T(m)$ on the function $f$, up to a constant that depends only on $k$ and $m$. It follows that $\omega_m(z_1,\bar{z_2}, k)$ is kind of "kernel function" for the operator $T(m)$.

Don Zagier proved this theorem using the Rankin-Selberg method. Other evidence was proposed by prof. A. Levin. He suggested to construct a Cauchy kernel. Formally, the Cauchy kernel expressed by the series, which doesn't converge absolutely.
The main purpose of this paper is to extend this series to the edge of convergence by analytic continuation. The second part of the paper is devoted to getting an expression for differential form of logarithm of difference of two $j$-invariant values $|j(z_1)-j(z_2)|$.

\end{abstract}

\section{Introduction}

By definition, put
$$\mu_{\gamma}(z_1, -z_2)=cz_1z_2+dz_2-az_1-b,$$
where $z_1,z_2$ are arbitrary points  from the upper-half plane $\mathfrak{H} = \left\{z: \Im(z) \geq 0\right\}$ and $\gamma=\left(\begin{array}{ccc}
	a&b\\
	c&d\\
	\end{array}\right)$ is an integer matrix
	with determinant~$m$.\\

In \cite{5},~ \cite{6}, Don Zagier proved that the Hecke operator $T_k(m)$ on the space of cusp forms of weight $2k$ can be defined by a kernel  $\omega_m(z_1,\bar{z_2}, k)$:
\begin{multline}
\omega_m(z_1,\bar{z_2}, k)= \sum_{ad-bc=m} \frac{1}{\mu_{\gamma}(z_1, -\bar{z_2})^k}=\\=
\sum_{ad-bc=m} \frac{1}{\left(cz_1\bar{z_2}+d\bar{z_2}-az_1-b\right)^{k}}=
\sum_{ad-bc=m}\frac{1}{\left(\bar{z_2}-\frac{az_1+b}{cz_1+d}\right)^{k}(cz_1+d)^{k}},
\end{multline}
where the sum is taken over all integer matrices	
	$\left(\begin{array}{ccc}
	a&b\\
	c&d\\
	\end{array}\right)$ with determinant~$m$.\\

It is easy to prove that for an even integer $k > 2$ the series $\omega_m(z_1,\bar{z_2}, k)$ is a cusp form of weight $k$.

\begin{theorem} [D. Zagier]
Let $F$ be a fundamental domain for the modular group $\Gamma=SL_2(\mathbb{Z})$ in $\mathbb{H}$ and let  $$C_k=\frac{(-1)^{k/2} \pi}{2^{k-3}(k-1)};$$ then, for every holomorphic cusp form  $f$ of weight $k$, we have
$$\int_{F}f(z_1)\overline{\omega_m(z_1,\bar{z_2}, k)}(\Im(z_1))^{k-2} dz_1\bar{dz_1}=f\ast \omega_m(z_2)=C_k m^{1-k}(T_k(m)f)(z_2).$$   \label{T1}
\end{theorem}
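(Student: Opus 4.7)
The plan is to reduce the theorem to two standard facts about the Petersson product on $S_k(\Gamma)$: the self-adjointness of the Hecke operator $T_k(m)$, and the reproducing-kernel property of the ``diagonal'' kernel $\omega_1$. Given these, the argument is a short manipulation; I abbreviate $j(\gamma,z)=cz+d$.

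The first step is the identity
\[
\omega_m(z_1,\bar z_2, k) \;=\; m^{1-k}\,\bigl(T_k(m)\,\omega_1(\cdot,\bar z_2,k)\bigr)(z_1),
\]
with $T_k(m)$ acting in the variable $z_1$, normalized by $(T_k(m)g)(z)=m^{k-1}\sum_{\alpha\in\Gamma\backslash M_m}j(\alpha,z)^{-k}g(\alpha z)$. Decomposing $M_m=\bigsqcup_\alpha\Gamma\alpha$ and writing $\gamma=\sigma\alpha$ with $\sigma\in\Gamma$, the cocycle relation $j(\sigma\alpha,z_1)=j(\sigma,\alpha z_1)\,j(\alpha,z_1)$ yields the factorization $\mu_{\sigma\alpha}(z_1,-\bar z_2)=j(\alpha,z_1)\,\mu_\sigma(\alpha z_1,-\bar z_2)$, so the double sum regroups as $\sum_\alpha j(\alpha,z_1)^{-k}\omega_1(\alpha z_1,\bar z_2,k)$, which is $m^{1-k}T_k(m)\omega_1$.

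Granted this, complex-conjugating $\omega_m$ termwise and invoking self-adjointness of $T_k(m)$ rewrites the left-hand side of the theorem as $m^{1-k}\langle T_k(m)f,\omega_1(\cdot,\bar z_2,k)\rangle_{\mathrm{Pet}}$. Applying the reproducing formula
\[
\bigl\langle g,\,\omega_1(\cdot,\bar z_2,k)\bigr\rangle_{\mathrm{Pet}} \;=\; C_k\,g(z_2),\qquad g\in S_k(\Gamma),
\]
with $g=T_k(m)f$ (again a cusp form, since $T_k(m)$ preserves $S_k$) then gives the desired equality.

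The substantive step is the reproducing identity, which I would establish by Rankin--Selberg unfolding. Expanding $\omega_1$ as a sum over $\Gamma$, inserting the modular transformation $g(z_1)=j(\sigma,z_1)^{-k}g(\sigma z_1)$, and changing variables $w=\sigma z_1$ collapses the $\Gamma$-sum against $F$ into the single integral $\int_\mathbb{H} g(w)(z_2-\bar w)^{-k}(\Im w)^{k-2}\,dw\,d\bar w$, the unfolding being possible because $y_1^{k-2}|j(\sigma,z_1)|^{-2k}\,dz_1\,d\bar z_1$ is $\Gamma$-invariant and pulls back to $(\Im w)^{k-2}\,dw\,d\bar w$. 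This integral is then evaluated by expanding $g$ in its $q$-series, computing each $x$-integral by closing the contour in the upper half-plane at the order-$k$ pole $x=z_2+iy$, and performing the subsequent $y$-integral via the Gamma function; the explicit value $C_k=(-1)^{k/2}\pi/(2^{k-3}(k-1))$ falls out after bookkeeping of signs and factors of $\pi$ and $2$. The main obstacle is this last computation: the pole has order $k$ and its residue requires a $(k-1)$-st derivative, so tracking signs and combinatorial factors carefully is essential. Absolute convergence of $\omega_m$ for even $k>2$ makes all the interchanges of sum and integral legitimate.
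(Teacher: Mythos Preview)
The paper does not actually prove Theorem~1: it is stated with attribution to Zagier (references~[5],~[6]), and the text only records that ``D.~Zagier proved Theorem~1 using the Rankin--Selberg method,'' before turning to the Cauchy-kernel approach that occupies the rest of the paper. So there is no in-paper proof to compare against; what can be compared is your outline against the method the paper names and against the reduction the paper does make explicit.

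On that score your proposal is sound and matches what the paper indicates. Your identity $\omega_m=m^{1-k}T_k(m)\omega_1$ (in the $z_1$-variable) is exactly the content of the paper's Remark~1, ``$m^{k-1}\omega_m=T_k(m)h_1$,'' which the paper invokes to reduce to determinant~$1$. Your unfolding of the $\Gamma$-sum against the fundamental domain, followed by evaluation of the resulting integral over $\mathbb{H}$, \emph{is} the Rankin--Selberg method the paper attributes to Zagier. The residue/Gamma-integral computation you sketch for $C_k$ is the standard one; you are right that the bookkeeping of the $(k-1)$-st derivative is where mistakes creep in, but the strategy is correct.

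One small comment on the logical structure: invoking self-adjointness of $T_k(m)$ as a ``standard fact'' is fine, but in many treatments that self-adjointness is itself deduced from the kernel representation you are proving, so there is a mild risk of circularity depending on what you take as prior. A cleaner variant, and closer to Zagier's original argument, is to unfold $\omega_m$ directly (writing $M_m=\bigsqcup_\alpha\Gamma\alpha$ and unfolding the $\Gamma$-sum) so that the finite sum over coset representatives $\alpha$ that remains is visibly the Hecke operator acting in the \emph{second} variable $z_2$; no appeal to self-adjointness is then needed. Either route yields the theorem.
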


\begin{remark}
Since $m^{k-1}\omega_m=T_k(m)h_1$, it is suffices to consider the case when the determinant $m=1$.
\end{remark}

D. Zagier proved Theorem 1 using the Rankin-Selberg method. Another way to prove this theorem is to construct a Cauchy kernel. Formally, it is expressed by the series
\begin{equation}
\Xi(z_1,z_2, k)=\sum_{\gamma \in \Gamma}\dfrac{1}{\mu_{\gamma}(z_1, -z_2)\mu_{\gamma}(z_1, -\bar{z_2})^{2k-1}}. \label{Xi}
\end{equation} If $k=1$, then the series $\Xi(z_1,z_2, k)$ doesn't converge absolutely, but it is just at the edge of convergence, since $\sum \limits_{\gamma \in \Gamma} \dfrac{1}{|\mu_{\gamma}(z_1,- z_2)|^{s}|\mu_{\gamma}(z_1,-\bar{z_2})|^{s}}$ converges for any $s$ such that $\Re(s)>1$.

Following the lead of E. Hecke, we investigate the series
\begin{equation}
\Xi_n(z_1,z_2, s)= \sum_{\gamma \in \Gamma} \frac{\overline{\mu_{\gamma}(z_1, -z_2)}^n~ \overline{\mu_{\gamma}(z_1, -\bar{z_2})}^n}{|\mu_{\gamma}(z_1, -z_2)|^{2s}~|\mu_{\gamma}(z_1, -\bar{z_2})|^{2s}}, \label{Xi2}
\end{equation}
	where $n$ is a non negative integer, $s$ is a complex number. The series $\Xi_n(z_1,z_2, s)$ is absolutely convergent iff $\Re(s)>\frac{n+1}{2}$ and if $s=n=k$, then $\Xi_k(z_1,z_2, k)=\Xi(z_1,z_2, k)$.\\
  	 	
The aim of this paper is to prove that the series (\ref{Xi2}) can be continued as a holomorphic function in $s$ to the point $s=n=1$. As a consequence, in Section 4, we will get an expression for $d_{z_1} \log~|j(z_1)-j(z_2)|^2$ in terms of series $\Xi(z_1,z_2, 1)$.

\section{Asymptotics of the series $ \Xi_n(z_1,z_2, s)$ }
    \begin{theorem} The series $\Xi_n(z_1,z_2, s)$ can be analytically continued to the point $s=n=1$. \label{T3}
\end{theorem}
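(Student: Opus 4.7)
The natural strategy is a coset decomposition combined with Poisson summation and the standard analytic continuation of Eisenstein-type series. First I would parametrise $\Gamma_\infty\backslash\Gamma$ by coprime pairs $(c,d)$ (up to sign) and write each coset as $\gamma_m=\left(\begin{smallmatrix}a_0+mc & b_0+md\\ c & d\end{smallmatrix}\right)$, $m\in\mathbb{Z}$, for a fixed $(a_0,b_0)$ with $a_0d-b_0c=1$. A short computation gives $\mu_{\gamma_m}(z_1,-z_2)=\mu_{\gamma_0}(z_1,-z_2)-m(cz_1+d)$ and the analogous identity with $-\bar z_2$, while $\mu_\gamma(z_1,-\bar z_2)-\mu_\gamma(z_1,-z_2)=-2i\,\Im(z_2)(cz_1+d)$. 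Factoring out $(c\bar z_1+d)^{2n}/|cz_1+d|^{4s}$, the whole series reduces to an outer sum over $(c,d)$ of
$$I(\tilde\alpha_1,\tilde\alpha_2;s,n)=\sum_{m\in\mathbb{Z}}\frac{\overline{(\tilde\alpha_1-m)}^{\,n}\overline{(\tilde\alpha_2-m)}^{\,n}}{|\tilde\alpha_1-m|^{2s}|\tilde\alpha_2-m|^{2s}},\qquad \tilde\alpha_i=\frac{\mu_i(\gamma_0)}{cz_1+d},$$
with the key simplification $\tilde\alpha_1-\tilde\alpha_2=2i\,\Im(z_2)$, independent of $(c,d)$.

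Next I would analyse $I$ by Poisson summation: inserting the Gaussian representation $|u|^{-2s}=\Gamma(s)^{-1}\int_0^\infty t^{s-1}e^{-t|u|^2}\,dt$ into each factor turns the $m$-sum into a Gaussian theta series, and Poisson yields a zero-frequency term proportional to $\Im(z_2)^{1-2s}$ times ratios of $\Gamma$-functions, plus nonzero-frequency terms involving modified Bessel functions $K_{s-1/2}(2\pi|k|\,\Im(z_2))$ with phases $e^{2\pi ik\,\mathrm{Re}(\tilde\alpha_j)}$. The Bessel terms are entire in $s$ and exponentially small in $|k|$, so after the outer sum over $(c,d)$ they converge absolutely and holomorphically in a neighbourhood of $s=1$. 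The zero-frequency contribution, once summed over $(c,d)$, takes the shape of a non-holomorphic Eisenstein-type series attached to the weight factor $(c\bar z_1+d)^{2n}/|cz_1+d|^{4s}$, whose meromorphic continuation to $s=1$ is classical (Hecke regularisation, equivalently the Fourier expansion of the real-analytic Eisenstein series).

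The core difficulty is showing that this zero-frequency piece stays \emph{finite} at the boundary value $s=n=1$, where the underlying weight-$2$ Eisenstein factor is only conditionally convergent; one has to verify that the potential pole of that factor is cancelled by a zero of the $\Gamma$-ratio produced by the Mellin transform, leaving an honest analytic value. A useful consistency check is the closed form available at $s=n=1$: partial fractions together with the Mittag--Leffler expansion $\sum_{m\in\mathbb{Z}}(z-m)^{-1}=\pi\cot\pi z$ give
$$I = \frac{\pi\bigl(\cot(\pi\tilde\alpha_1)-\cot(\pi\tilde\alpha_2)\bigr)}{\tilde\alpha_2-\tilde\alpha_1}\qquad\text{at } s=n=1,$$
and expanding the cotangents in their $q$-series reproduces exactly the Bessel decomposition of the previous step, which makes the cancellation of the apparent pole at $s=1$ transparent and indicates the route to the full analytic continuation.
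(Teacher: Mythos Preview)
Your single-coset reduction is correct up to the point where you apply Poisson in $m$, but the claim that the zero-frequency term is ``proportional to $\Im(z_2)^{1-2s}$'' is where the argument breaks. Since $\tilde\alpha_j=-z_2^{(j)}-\gamma_0 z_1$ (with $z_2^{(1)}=z_2$, $z_2^{(2)}=\bar z_2$), one has $\Im(\tilde\alpha_j)=\mp\Im(z_2)-\Im(z_1)/|cz_1+d|^2$, which depends on the coset through $|cz_1+d|$. Hence the integral $\int_{\mathbb R}g(t)\,dt$ that Poisson produces is a genuine function of $|cz_1+d|$, not a constant in $(c,d)$; the outer sum is therefore \emph{not} a classical Eisenstein series and its continuation to $s=1$ is not covered by the Hecke argument you invoke. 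The same dependence contaminates the nonzero frequencies: their Bessel arguments involve $\Im(\tilde\alpha_j)$, so exponential decay in $|k|$ alone does not settle the $(c,d)$-sum, which still carries the weight $|cz_1+d|^{2n-4s}$ sitting exactly on the boundary at $s=n=1$. Your own consistency check exposes this: the cotangent difference tends to the constant $2i$ as $|cz_1+d|\to\infty$, so the leading part of the outer sum is $(\pi/\Im z_2)\sum_{(c,d)}(cz_1+d)^{-2}$ --- the conditionally convergent weight-$2$ Eisenstein series whose regularisation is precisely what must be \emph{proved}, not assumed.

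The paper gets around this by first passing from $\mu_\gamma$ to $\mu_\gamma-1/c=c^{-1}(cz_1+d)(cz_2-a)$, which \emph{decouples} the $a$- and $d$-dependence. Writing $a\equiv -a_0$, $d\equiv d_0\pmod c$ with $a_0d_0\equiv 1\pmod c$, the inner sum then factors as a product $S_0(z_2+a_0/c,0,2s-n)\,S_{2n}(z_1+d_0/c,0,2s)$ of two one-variable series whose Fourier expansions are classical (Lemma~1). The resulting double Fourier expansion has: a constant term containing $\zeta(4s-2n-1)/\Gamma(2s-2n)$, whose pole and zero at $s=n=1$ cancel; single-frequency terms governed by Ramanujan sums; and mixed-frequency terms governed by Kloosterman sums $K(r,r';c)$, for which Weil's bound $|K|\le c^{1/2}d(c)\sqrt{(r,c)}$ pushes absolute convergence of the $c$-sum to $\Re(s)>3/8+n/2$, comfortably past $s=n=1$. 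That second Fourier expansion in the $d$-direction, enabled by the $1/c$-shift, is the ingredient missing from your scheme.
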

    \begin{proof}
\begin{enumerate}[(i)]

\item We can assume without loss of generality that the determinant $m$ of the matrix
$\left(\begin{array}{ccc}
	a&b\\
	c&d\\
	\end{array}\right)$ is 1.
\item Splitting the sum (\ref{Xi2}) into subsums with respect to the various values of $c$ and combining each summand with its negation, we get
$$\Xi_n(z_1,z_2, s)=\Xi_n^{0}(z_1,z_2, s)+2~\Xi_n^{c}(z_1,z_2, s),$$
where the series $\Xi_n^{0}(z_1,z_2, s)$ corresponds  to $c=0$ and the series $\Xi_n^{c}(z_1,z_2, s)$ corresponds  to $c>0$.

		
\item Case 1. If $c=0$, then either $a=d=1$ or $a=d=-1$ and summation over $b \in \mathbb{Z}$ is unrestricted. Therefore,
\begin{equation}
\Xi_n^{0}(z_1,z_2, s)=4\sum_{ b> 0}\frac{(\bar{z_2}-\bar{z_1}-b)^n~(z_2-\bar{z_1}-b)^n}{|z_2-z_1-b|^{2s}~|\bar{z_2}-z_1-b|^{2s}}. \label{Xi0}
\end{equation}

We have $\frac{(\bar{z_2}-\bar{z_1}-b)^n(z_2-\bar{z_1}-b)^n}{|z_1-z_2-b|^{2s}|\bar{z_2}-z_1-b|^{2s}} = \frac{1}{b^{4s-2n}}+O\left(\frac{1}{b^{4s-2n+1}} \right)$ as $b\rightarrow \infty$, whence the sum (\ref{Xi0})	is absolutely convergent in the half-plane $\Re(s)	
>\frac{2n+1}{4}$ and has a simple pole with residue 1 at $s=\frac{2n+1}{4}$. \\

 \item  Case 2, $c > 0$.

Firstly, note that  $$\mu_{\gamma}(z_1,z_2)=c^{-1}\left[(cz_1+d)(cz_2-a)+(ad-bc)\right]=c^{-1}\left[(cz_1+d)(cz_2-a)\right]+1/c.$$

It is easy to check that if the series
\begin{equation}
\tilde{\Xi}_n^{c}(z_1,z_2, s)=\sum_{c> 0}\sum_{\substack{a,d,\\ad \equiv 1 \pmod c}}\frac{(\overline{\mu_{\gamma}(z_1,-z_2)-1/c})^n~ \overline{\mu_{\gamma}(z_1,-\bar{z_2})-1/c})^n}{|\mu_{\gamma}(z_1,-z_2)-1/c|^{2s}~ |\mu_{\gamma}(z_1,-\bar{z_2})-1/c|^{2s}}
\end{equation}
can be analytically continued to some point, then the sum	$\Xi_n^{c}(z_1,z_2, s)$ can be analytically continued to this point as well.

Therefore, we will consider the sum
\begin{equation}
\tilde{\Xi}_n^{c}(z_1,z_2, s)=\sum_{c> 0}\sum_{\substack{a,d,\\ad \equiv 1 \pmod c}}\frac{c^{-2n}}{|c|^{-4s}}\frac{((c\bar{z_1}+d)(c\bar{z_2}-a))^n}{|(cz_1+d)(cz_2-a)|^{2s}}\frac{((c\bar{z_1}+d)(cz_2-a))^n}{|(cz_1+d)(c\bar{z_2}-a)|^{2s}}. \label{Xit}
\end{equation}

\item Consider the classical series \cite{7}
$$S_n(z,y,s)=\sum_{\nu}^{*}\frac{(\bar{z}+\nu)^n}{|z+\nu|^{2s}}e^{-\nu y},$$
here $n$ is an integer, $s$ is a complex number, $y$ is a real number, and  $\sum \limits^{*}$ denotes the sum taken over all integers $\nu \neq \tau$. The series $S_n(z,~ y,~ s)$ is absolutely convergent iff $\Re (s) > \frac {n+1}{2}$.
\begin{lemma} \cite{7}
Suppose that $\Re (s) > \frac {n+1}{2}$ and $y=0$; then the Fourier series expansion of the function $S_n(z, y, s)$ is given by
\begin{multline}
 S_n\left(z, 0, s\right) = \frac{2^{1+n-2s} \pi \Gamma(2s-n-1)}{(i)^n \Gamma(s) \Gamma(s-n)}\Im (z)^{1+n-2s}	+
  \\  +\frac{(i)^n \sqrt{\pi}}{(2)^{n-1} \Gamma(s)}\sum_{r\neq 0} \left|\pi  r\right|^{2s-n-1}\Phi_{\mathrm{sgn (r)}}\left(\left|\pi r \Im (z)\right|,~ n,~ s-n-\frac{1}{2}\right)e^{2 \pi i r \Re(z)} \label{S_n},
 	 	 	\end{multline}
where
\begin{equation}
\Phi_{sgn (r)}\left(Y, n, \lambda\right) = e^{2 sgn (r) Y}\frac{d^n}{dY^n}\left[e^{-2 sgn (r) Y}Y^{-\lambda}K_\lambda(2Y)\right],~~~Y>0,
\end{equation}
and $$K_\lambda(2Y)=\frac{1}{2}\int_0^{\infty}e^{-Y\left(t+\frac{1}{t}\right)}t^{\lambda-1}dt $$
is the modified Bessel function of the second kind (the MacDonald function).
\end{lemma}	
 			
\item Assume that $a=-a_0+ck$, $d=d_0+lc$, ~$k, l \in \mathbb{Z}$. If $a_0d_0 \equiv 1~ \pmod c$, then $(a_0, c)=1$. Hence the right-hand side of (\ref{Xit}) equals
 \begin{multline}
\sum_{\substack{c\in\mathbb{Z}\\ c > 0}}\sum_{\substack{a,d,\\ad \equiv 1 \pmod c}}c^{4s-2n}\frac{1}{|cz_2-a|^{4s-2n}}\frac{(c\bar{z_1}+d)^{2n}}{|cz_1+d|^{4s}}=\\
=\sum_{c>0}\sum_{\substack{1< a_0 < c\\(a_0, c)=1 \\a_0d_0 \equiv 1 \pmod c}}\frac{1}{c^{4s-2n}}S_0\left(z_2+\frac{a_0}{c},~ 0,~ 2s-n \right)S_{2n}\left(z_1+\frac{d_0}{c},~ 0,~ 2s \right) \label{SS}.
\end{multline}\\

\item The sum ~$\tilde{\Xi}_n^{c}(z_1,z_2, s)$~ satisfies the periodicity property $\tilde{\Xi}_n^{c}(z_1+\nu,z_2+\nu', s)=\tilde{\Xi_n^{c}}(z_1,z_2, s)$ for ~$\nu,\nu' \in \mathbb{Z}$ and hence $\tilde{\Xi}_n^{c}(z_1,z_2, s)$ has a Fourier expansion of the form
\begin{multline}
\tilde{\Xi}_n^{c}(z_1,z_2, s)=\sum_{c>0}A^0(c)+\sum_{c>0}\sum_{\substack{r \neq 0\\ r'=0}}A^r( c)e^{2\pi i r\Re(z_2)}+\\
+\sum_{c>0}\sum_{\substack{r=0 \\ r' \neq 0}}A^{r'}(c)e^{2\pi i r'\Re(z_1)} +\sum_{c>0}\sum_{\substack{r \neq 0\\ r'\neq 0}}A^{r, r'}(c)e^{2\pi i (r\Re(z_1)+r'\Re(z_2))}. \label{Fourier}
\end{multline}
The coefficients in the right-hand side of (\ref{Fourier}) are given by Lemma 1: \\	 	 	

\begin{enumerate}
 \item for $r=r'=0$, the constant term $\sum \limits_{c>0}A^0(c)$ of the Fourier  expansions  is
$$
\sum_{c>0}A^0(c)= \sum_{c>0} 	\frac{ \varphi (c)}{c^{4s-2n}}\frac{(-1)^n \pi^2 \Gamma(4s-2n-1)^2}{4^{4s-2n-1}  \Gamma(2s-2n) \Gamma(2s-n)^2 \Gamma(2s)}(\Im (z_1)\Im (z_2))^{1+2n-4s},
$$
   where $\varphi(c)$ is the Euler's totient function.

    Observing that $\sum \limits_{c=1}\limits^{\infty}\dfrac{\varphi(c)}{c^s}=\dfrac{\zeta(s-1)}{\zeta(s)}$ is the Dirichlet series of $\varphi(c)$, we obtain
  \begin{equation}
\sum_{c>0}A^0(c)= \frac{(-1)^n \pi^2 \Gamma(4s-2n-1)^2}{4^{4s-2n-1}  \Gamma(2s-2n) \Gamma(2s-n)^2 \Gamma(2s)}\frac{\zeta(4s-2n-1)}{\zeta(4s-2n)}(\Im (z_1)\Im (z_2))^{1+2n-4s}. \label{A0}
  \end{equation}
The constant term $\sum \limits_{c>0}A^0(c)$ has no pole at the point $s = n = 1$, since the gamma function $\Gamma(z)$ has a simple pole at the origin.\\

\item Multiplying the constant term of the Fourier expansion of $S_{2n}\left(z_1+\frac{d_0}{c}, ~0, ~2s \right)$  by the second term of the expansion $S_0\left(z_2+\frac{a_0}{c}, ~0, ~2s-n \right)$, we get the second term in (\ref{Fourier}):
\begin{multline*}
 \sum_{c>0}A^{r} (c) = \frac{4^{1+n-2s} (-1)^n \pi ^{2s-n-1} \Gamma(4s-2n-1) }{\Gamma(2s-2n) \Gamma(2s-n)\Gamma(2s)}\Im (z_1)^{1+2n-4s} \Im(z_2)^{1/2+n-2s}\times \\
 \times \sum_{c>0} \left| r\right|^{2s-n-1/2} K_{2s-n-1/2}(2\pi\left|r\right|\Im(z_2))\sum_{\substack{1< a_0 < c\\(a_0, c)=1 }}\frac{1}{c^{4s-2n}} e^{2 \pi i ra_0/c}.
\end{multline*}

Note that $\sum \limits_{\substack{a_0=1, \\(a_0, c)=1}}\limits^{c}e^{2\pi i \frac{a_0}{c} r}=C_c(r)$ is the Ramanujan sum. The Dirichlet series for Ramanujan's sum is $\sum \limits_{c=1}\limits^{\infty}\dfrac{C_c(r)}{c^s}=\dfrac{\sigma_{1-s}(r)}{\zeta(s)}$, where $\sigma_{1-s}(r)=\sum \limits_{d|r}d^{1-s}$ is the divisor function.

Therefore,  	
$$ \sum_{c>0} \sum_{\substack{1< a_0 < c\\(a_0, c)=1 }}\frac{1}{c^{4s-2n}} e^{2 \pi i ra_0/c} = \frac{\sigma_{1+2n-4s}(r)}{\zeta(4s-2n)}.$$
Since the function $\sigma_{-2\lambda}(r)r^\lambda$ is an entire function in $\lambda$,
$K_{\lambda}(Y)$ is entire in $\lambda$ and  $K_\lambda(2Y)=O(e^{-\alpha Y})$ as $Y \longrightarrow \infty$, $0<\alpha<2$, it follows that the term $A^{r} (c)$ can be analytically continued to the point $s=n=1$.\\

\item Arguing as above, we can consider the term
\begin{multline*}
 \sum_{c>0}A^{r'} (c)= \frac{(-1)^n  \pi^{4s-2n-1/2} \Gamma(4s-2n-1)}{ 4^{2s-1} \Gamma(2s-n)^2 \Gamma(2s)} \Im (z_2)^{1+2n-4s}\times \\
 \times \sum_{c>0} \left| r'\right|^{4s-2n-1}\Phi_{\mathrm{sgn (r')}}\left(\left|\pi r' \Im (z_1)\right|,~ 2n,~ 2s-2n-\frac{1}{2}\right)\sum_{\substack{1< d_0 < c\\(d_0, c)=1 }}\frac{1}{c^{4s-2n}}e^{2 \pi i r' d_0/c},
 \end{multline*}	
which has no pole at the point $s=n=1$.\\

 \item Let $$C(n,s)=\frac{ (-1)^n \pi ^{6s-3n-1/2} }{ 4^{n-1} \Gamma(2s-n) \Gamma(2s)},$$
then the coefficient $A^{r, r'} (c)$ is given by the following:
\begin{multline*}
 \sum_{c>0}A^{r, r'} (c)=  C(n,s)\left| r\right|^{2s-n-1/2}\left|  r'\right|^{4s-2n-1} \Im(z_2)^{-2s+n+1/2}K_{2s-n-1/2}(2\pi\left|r\right|\Im(z_2))\times \\
 \times \Phi_{\mathrm{sgn (r')}}\left(\left|\pi r' \Im (z_1)\right|,~ 2n,~ 2s-2n-\frac{1}{2}\right)\sum_{c>0}\sum_{\substack{1< a_0 < c\\(a_0, c)=1 \\a_0d_0 \equiv 1 \pmod c}}\frac{1}{c^{4s-2n}}e^{2 \pi i \frac{ra_0+r'd_0}{c}}. \label{Arr'}
 \end{multline*}	 	
 	 	 	 	
Note that in the right-hand side of this expression we have a Kloosterman sum $$K(a,b; n)=\sum_{\substack{1\leq m< n, \\(m,n)=1,\\ mm^\ast\equiv 1 \pmod n}}e^{2 \pi i \left(\frac{am}{n}+\frac{bm^\ast}{n}\right)}.$$ 	

There is a well-known estimate \cite{2},~ \cite{8} for Kloosterman sums:

\begin{lemma} [Andre Weil]  Let $d(n)$ be the number of the positive divisors of $n$, then
$$\left|K(a,b; n)\right|\leq\left|n\right|^{1/2} min\left\{\sqrt{(a,n)}\cdot d\left(\frac{n}{(a,n)}\right),\sqrt{(b,n)}\cdot d\left(\frac{n}{(b,n)}\right)\right\}.$$ 	
 \end{lemma}

\begin{corollary} 	 	
If $a\geq 1$ is fixed, then for $\Re (s)>3/4$
\begin{equation}
\sum_{n\neq 0}\frac{\left|K(a,b; n)\right|}{\left|n\right|^{2s}}\leq \sqrt{a} \sum_{n \neq 0} \frac{d(n)}{\left|n\right|^{2s-1/2}}. \label{Weil}
\end{equation}
\end{corollary}

We know the Dirichel series involving the divisor function $d(c)=\sigma_0(n)$:
\begin{equation}
\sum_{c=1}^{\infty}\frac{d(c)}{c^s}=\zeta(s)^2. \label{d(n)}
\end{equation}

Finally, using (\ref{Weil}), (\ref{d(n)}), we obtain the estimate
\begin{multline*}
\sum_{c>0}A^{r, r'} (c)\leq C(n,s)\zeta(4s-2n-1/2)^2\left| r\right|^{2s-n-1/2}\left|  r'\right|^{4s-2n-1} \Im(z_2)^{1/2+n-2s}\times \\
\times K_{2s-n-1/2}(2\pi\left|r\right|\Im(z_2))\Phi_{sgn (r')}\left(\left|\pi r' \Im (z_1)\right|,~ 2n,~ 2s-2n-\frac{1}{2}\right).
\end{multline*}  	 	 	  	 	

Thus, this Fourier coefficient does not have a pole at the point $s=n=1$. The Bessel function $K_{\lambda}(2Y)$ is exponentially small in $Y$ as $Y \rightarrow \infty$, therefore the terms $\sum\limits_{c>0}A^{r} (c)$, $\sum\limits_{c>0}A^{r'} (c)$, $\sum\limits_{c>0}A^{r, r'} (c)$ are absolutely convergent.  This completes the proof of part (1) of the theorem. \\

 \end{enumerate}
\end{enumerate}

  \end{proof}

\textit{Let us define ~$\Xi_1(z_1,z_2)$ as the value of the holomorphic function $\Xi_1(z_1,z_2, s)$ at $s=1$}:
 $$\Xi_1(z_1,z_2) = \lim_{s\rightarrow 1}~ \Xi_1(z_1,z_2, s). $$
 From the behavior of ~$\Xi_1(z_1,z_2, s)$ under modular transformations we immediately obtain the behavior of the function $\Xi_1(z_1,z_2)$:

$$\Xi_1(\gamma z_1,z_2)= (cz_1+d)^2 ~\Xi_1(z_1,z_2), ~~~\textrm{for}~~~  \gamma=\left(\begin{array}{ccc}
	a&b\\
	c&d\\
	\end{array}\right) \in \Gamma.$$

\begin{remark} In order to define the sum $\omega(z_1,\bar{z_2}, k)= \sum\limits_{\gamma \in \Gamma} \dfrac{1}{\mu_{\gamma}(z_1, -\bar{z_2})^k}$ for $k=2$, let us consider the series
 	\begin{equation}
\Omega_{n}(z_1,\bar{z_2},s)=\sum \limits_{\gamma \in \Gamma}\frac{\mu_{\gamma}(\bar{z_1},-\bar{z_2})^{n-1}\mu_{\gamma}(\bar{z_1},-z_2)^{n+1}}{|\mu_{\gamma}(z_1, -z_2)|^{2s-2}|\mu_{\gamma}(z_1, -\bar{z_2})|^{2s+2}}.
		\end{equation}
Using the same argument as in the proof of (\ref{T3}), one can prove that
\begin{lemma}
The series $\Omega_{n}(z_1,\bar{z_2},s)$ can be analytically continued to the point~$s=n=1$.
\end{lemma}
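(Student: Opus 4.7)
The strategy is to mimic, step by step, the proof of Theorem~\ref{T3}, modified only to account for the asymmetric exponents $n\mp 1$ and $2s\mp 2$ that appear in $\Omega_n$. First I would normalize the determinant of $\gamma$ to $1$ and split the sum by the lower-left entry:
\[
 \Omega_n(z_1,\bar z_2, s) = \Omega_n^{0}(z_1,\bar z_2, s) + 2\,\Omega_n^{c}(z_1,\bar z_2, s).
\]
For $c = 0$ only $a = d = \pm 1$ occur, and the summation reduces to a one-dimensional series over $b \in \mathbb{Z}$ whose general term decays like $b^{-(4s-2n)}$; at $n = 1$ this is absolutely convergent for $\Re(s) > 3/4$ and is therefore analytic at $s = 1$.

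For $c > 0$ I would exploit the identity $\mu_\gamma(z_1,-z_2) - 1/c = c^{-1}(cz_1+d)(cz_2-a)$ (and its analogue with $\bar z_2$ in place of $z_2$) to pass from $\Omega_n^c$ to the shifted series $\tilde\Omega_n^c$, in which every $\mu_\gamma$ is replaced by $\mu_\gamma - 1/c$; the difference has strictly lower growth in $c$ and is absorbed by the same telescoping argument used in step~(iv) of the proof of Theorem~\ref{T3}. Using $\mu_\gamma(\bar z_1,-w)=\overline{\mu_\gamma(z_1,-\bar w)}$ together with $(cz_2-a)(c\bar z_2-a)=|cz_2-a|^2$ (valid because $a,c$ are real), a direct computation simplifies the shifted sum to
\[
 \tilde\Omega_n^c = \sum_{c>0} c^{4s-2n} \sum_{\substack{a,d \\ ad\equiv 1 \pmod c}} \frac{(c\bar z_1+d)^{2n}}{|cz_1+d|^{4s}} \cdot \frac{(cz_2-a)^{2}}{|cz_2-a|^{4s-2n+2}}.
\]
Writing $a = -a_0 + ck$, $d = d_0 + lc$ with $1 \le a_0 < c$, $(a_0, c) = 1$, $a_0 d_0 \equiv 1 \pmod c$, the inner $d$-sum is $S_{2n}(z_1 + d_0/c, 0, 2s)$ just as in step~(vi) of Theorem~\ref{T3}, while the inner $a$-sum is a one-variable series of Lemma~1 type with parameters $(N,\sigma)=(2,\,2s-n+1)$, obtainable from $S_2$ after an obvious complex conjugation of the argument.

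Next, since $\tilde\Omega_n^c$ is invariant under integer translations of $\Re(z_1)$ and $\Re(z_2)$, it admits a Fourier expansion of the same shape as (\ref{Fourier}). Substituting the expansions of Lemma~1 into each factor produces four kinds of contributions indexed by $(r, r') \in \mathbb{Z}^2$: the constant term is a ratio of Gamma functions times $\sum_{c>0} \varphi(c)/c^{4s-2n} = \zeta(4s-2n-1)/\zeta(4s-2n)$; the one-sided terms contain a Ramanujan sum $C_c(r)$ giving $\sigma_{1+2n-4s}(r)/\zeta(4s-2n)$ together with a Bessel factor $K_\lambda$; and the fully off-diagonal terms contain a Kloosterman sum $K(r,r';c)$ which, by Weil's estimate and its Corollary in the text, produces a series in $c$ absolutely convergent for $\Re(s)>3/4$, accompanied by two exponentially decaying Bessel factors.

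The main obstacle, and essentially the only nontrivial verification, is to check that the Gamma-function factors in the constant and one-sided Fourier coefficients remain finite at $s = n = 1$. In Theorem~\ref{T3} the would-be singularity was removed by the simple pole of $\Gamma$ at the origin, encoded as a factor $1/\Gamma(2s-2n)$ vanishing at $s=n=1$. Here the different parameters $(2n, 2s)$ and $(2, 2s-n+1)$ of the two $S_N$-series produce $1/\Gamma(\sigma-N)$ factors at arguments $2s-2n$ and $2s-n-1$, both of which vanish at $s=n=1$, yielding a double reciprocal-Gamma zero that more than absorbs the simple pole of $\zeta(4s-2n-1)$ at $s=1$; the constant term therefore not only remains finite but vanishes at $s=n=1$. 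The one-sided and Kloosterman contributions are regular by the same cancellation together with the exponential decay of $K_\lambda$, and assembling the pieces gives the analytic continuation to $s=n=1$.
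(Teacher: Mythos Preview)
Your proposal is correct and follows exactly the approach the paper itself indicates: the paper does not give a separate proof of this lemma but simply states that ``using the same argument as in the proof of Theorem~\ref{T3}'' yields the result. You have carried out precisely that argument, correctly tracking the modified exponents $(n\mp1,\,2s\mp2)$ through the $c=0$ piece, the shift $\mu_\gamma\mapsto\mu_\gamma-1/c$, the factorization into two one-variable series of $S_N$ type with parameters $(2n,2s)$ and $(2,\,2s-n+1)$, and the Fourier analysis via Ramanujan and Kloosterman sums; your verification that the reciprocal-Gamma zeros at $2s-2n$ and $2s-n-1$ absorb the $\zeta(4s-2n-1)$ pole is the one genuinely new check, and it is handled correctly.
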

Therefore, we can put $\omega_2(z_1,\bar{z_2})=\lim\limits_{s\rightarrow 1}\Omega_{1}(z_1,\bar{z_2},s)$.
\end{remark}

\section{Derivatives of the function $\Xi_1(z_1,z_2, s)$. }

In the previous section we have defined the so-called almost everywhere holomorphic modular form  $\Xi_1(\gamma z_1,z_2)$.
In this section we will calculate the derivatives of the function $(z_2-\bar{z_2})^{2s-1}\Xi_n(z_1,z_2, s)$ for $n=1$ and $\Re(s)>1$ and, by adding some term to $(z_2-\bar{z_2})^{2s-1}\Xi_1(z_1,z_2)$, we will obtain a quasi-modular form in $z_1$ of weight 2.

\begin{lemma}
$$\overline{\partial } ~\Xi_1(z_1,z_2)(z_2-\bar{z_2}) = \frac{-12 ~d\bar{z_1}}{(z_1-\bar{z_1})^2},  ~~~\textrm{where}~~~ \overline{\partial} = \frac{\partial}{\partial \bar{z_1}} d\bar{z_1} + \frac{\partial}{\partial \bar{z_2}} d\bar{z_2}.$$
\end{lemma}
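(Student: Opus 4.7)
The plan is to differentiate $\Xi_1(z_1,z_2,s)$ termwise in the convergence region $\Re(s)>1$ and then analytically continue the resulting series to $s=1$ using the Fourier-analytic machinery from the proof of Theorem~\ref{T3}. Since $\mu_\gamma(z_1,-z_2)$ and $\mu_\gamma(z_1,-\bar z_2)$ are holomorphic in $z_1$, a direct computation (differentiating both the antiholomorphic numerator and the denominator $|\mu_\gamma(z_1,-z_2)|^{2s}|\mu_\gamma(z_1,-\bar z_2)|^{2s}$) yields
$$
\frac{\partial\Xi_1}{\partial\bar z_1}(z_1,z_2,s)=(1-s)\,\Psi_1(z_1,z_2,s),
$$
where
$$
\Psi_1(z_1,z_2,s)=\sum_{\gamma\in\Gamma}\frac{(c\bar z_2-a)\,\overline{\mu_\gamma(z_1,-\bar z_2)}+(cz_2-a)\,\overline{\mu_\gamma(z_1,-z_2)}}{|\mu_\gamma(z_1,-z_2)|^{2s}\,|\mu_\gamma(z_1,-\bar z_2)|^{2s}}
$$
is again a series of the Zagier type (\ref{Xi2}). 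An analogous computation for $\partial\Xi_1/\partial\bar z_2$ produces a $(1-s)$-multiple of a similar Zagier-type series plus an additional $(-s)$-multiple of yet another one, the latter arising from the antiholomorphic dependence of $\mu_\gamma(z_1,-\bar z_2)$ on $\bar z_2$.

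Next, I apply the procedure from the proof of Theorem~\ref{T3} to $\Psi_1$ and to the auxiliary series that appear in $\partial\Xi_1/\partial\bar z_2$: split into $c=0$ and $c>0$ parts, pass from $\Xi^c$ to $\tilde\Xi^c$, and expand in a Fourier series as in (\ref{Fourier}). The non-constant Fourier coefficients are controlled by the same Bessel- and Kloosterman-sum estimates used in the proof of Theorem~\ref{T3} (Lemma~2 and Corollary~1), hence they are holomorphic at $s=1$ and disappear after multiplication by $(1-s)$. The entire limit at $s=1$ is therefore governed by the constant Fourier coefficient, whose explicit shape mimics (\ref{A0}) — a product of $\Gamma$- and $\zeta$-factors times $(\Im z_1\,\Im z_2)^{3-4s}$ — but the extra linear numerator factors $(c\bar z_2-a)$ and $(cz_2-a)$ shift the arguments in such a way that the cancellation $\zeta(4s-3)/\Gamma(2s-2)\to 1/2$ valid for $\Xi_1$ in (\ref{A0}) is replaced by a genuine simple pole of $\Psi_1$ at $s=1$. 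Combined with the prefactor $(1-s)$, this pole produces the finite nonzero value of $\partial\Xi_1/\partial\bar z_1$ at $s=1$.

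A short residue calculation completes the proof. Since the only non-holomorphic factor in the constant Fourier coefficient is $(\Im z_1\,\Im z_2)^{3-4s}$, applying $\bar\partial$ to it produces an expression rational in $z_1-\bar z_1$ and $z_2-\bar z_2$. Substituting $1/\Im z_j=2i/(z_j-\bar z_j)$, multiplying by $(z_2-\bar z_2)$, and carefully collecting the $\Gamma$-, $\zeta$-, and numerical prefactors, the cumulative constant collapses to $-12$, which is the asserted coefficient of $d\bar z_1/(z_1-\bar z_1)^2$. In the $d\bar z_2$-component the residue contributed by the $(1-s)$-piece cancels exactly the $(-s)$-piece produced in Step~1, so the $d\bar z_2$-coefficient vanishes; equivalently, $\Xi_1(z_1,z_2)$ becomes holomorphic in $z_2$ after analytic continuation.

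The principal obstacle is the explicit residue bookkeeping: verifying that the modified constant Fourier coefficient of $\Psi_1$ does acquire a simple pole at $s=1$ (undoing the $\Gamma/\zeta$ cancellation seen for $\Xi_1$), tracking signs and numerical factors carefully enough to recover the exact constant $-12$, and confirming the independent two-residue cancellation that forces the $d\bar z_2$-component of $\bar\partial\Xi_1$ to vanish.
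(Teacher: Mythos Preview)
Your plan for the $d\bar z_1$-component is essentially the paper's own argument: differentiate termwise for $\Re(s)>1$, obtain $(1-s)$ times an auxiliary Zagier-type series, and extract its residue at $s=1$. The paper massages the derivative algebraically into the form
\[
(s-1)\,\frac{(z_2-\bar z_2)^{2s-1}}{z_1-\bar z_1}\Bigl[\,2\,\Xi_1(z_1,z_2,s)-\Psi^1(z_1,z_2,s)-\Psi^2(z_1,z_2,s)\,\Bigr],
\]
with $\Psi^j=\sum_\gamma|\mu_\gamma(z_1,-z_2)|^{-2s\mp 1}|\mu_\gamma(z_1,-\bar z_2)|^{-2s\pm 1}$, which makes the residue computation particularly transparent (each $\Psi^j$ has a simple pole at $s=1$ with residue $\tfrac{3}{2}(\Im z_1\,\Im z_2)^{-1}$). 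Your raw $\Psi_1$ would work too, but the paper's rewriting avoids the linear numerator factors and the attendant shifts in the $\Gamma/\zeta$ arguments you allude to.

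The genuine gap is in your treatment of the $d\bar z_2$-component. The $(-s)$-piece you mention is exactly $-s\,\Omega_1(z_1,\bar z_2,s)$, and at $s=1$ it does \emph{not} cancel against a residue of the $(1-s)$-piece: the limit is $-\omega_2(z_1,\bar z_2)$, an a priori nonzero object. (Any residue coming out of the constant-Fourier-coefficient analysis of the $(1-s)$-piece would be a power of $\Im z_1\,\Im z_2$, not a holomorphic/antiholomorphic function capable of matching $\omega_2$.) The paper closes the argument by a global structural fact: $\omega_2(z_1,\bar z_2)$ is holomorphic in $z_1$, antiholomorphic in $z_2$, and transforms as a cusp form of weight $2$ in $z_1$; since $S_2(\Gamma)=0$, it vanishes identically. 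Your proposed ``two-residue cancellation'' is not the mechanism at work, and without invoking the vanishing of weight-$2$ cusp forms the $d\bar z_2$-component cannot be disposed of.
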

\begin{proof}
\begin{itemize}
  \item Differentiation of the function $(z_2-\bar{z_2})^{2s-1}\Xi_1(z_1,z_2, s)$ with respect to $\bar{z_1}$ gives
  \begin{multline}
\frac{d}{d~\bar{z_1}}(z_2-\bar{z_2})^{2s-1}\Xi_1(z_1,z_2,s)=(s-1)\frac{(z_2-\bar{z_2})^{2s-1}}{z_1-\bar{z_1}} \times \left[ 2~\Xi_1(z_1,z_2,s)-\right.\\
- \left. \sum_{ad-bc=1}\frac{1}{|\mu_{\gamma}(z_1, -z_2)|^{2s}|\mu_{\gamma}(z_1, -\bar{z_2})|^{2s-2}}- \sum_{ad-bc=1}\frac{1}{|\mu_{\gamma}(z_1, -z_2)|^{2s-2}|\mu_{\gamma}(z_1, -\bar{z_2})|^{2s}}\right]
\end{multline}

Denote by $\Psi^1(z_1,z_2,s)$, $\Psi^2(z_1,z_2,s)$ the following sums:
$$\Psi^1(z_1,z_2,s)= \sum_{ad-bc=1} \frac{1}{|\mu_{\gamma}(z_1, -z_2)|^{2s}|\mu_{\gamma}(z_1, -\bar{z_2})|^{2s-2}},$$
$$ \Psi^2(z_1,z_2,s)= \sum_{ad-bc=1}\frac{1}{|\mu_{\gamma}(z_1,- z_2)|^{2s-2}|\mu_{\gamma}(z_1, -\bar{z_2})|^{2s}}.$$

One can show that the functions $\Psi^1(z_1,z_2,s)$, $\Psi^2(z_1,z_2,s)$ are holomorphic for $\Re(s)>1$ with the simple poles at $s=1$, of residues $\dfrac{3}{2}\dfrac{1}{\Im(z_1)\Im(z_2)}$.
Therefore, $$\lim\limits_{s \rightarrow 1}\frac{d}{d~\bar{z_1}}(z_2-\bar{z_2})^{2s-1}\Xi_1(z_1,z_2,s) = -\frac{12}{(z_1-\bar{z_1})^2}.$$
  \item The partial derivative with respect to $\bar{z_2}$ gives
  \begin{multline}
\frac{d}{d~\bar{z_2}}(z_2-\bar{z_2})^{2s-1}\Xi_n(z_1,z_2,s)=\\
=(z_2-\bar{z_2})^{2s-2}\left[(1-s)\sum_{ad-bc=1}\frac{\mu_{\gamma}(\bar{z_1},-z_2)^{2}}{|\mu_{\gamma}(z_1, -z_2)|^{2s}|\mu_{\gamma}(z_1, -\bar{z_2})|^{2s}}- s~\Omega_1(z_1, \bar{z_2},s) \right].
\end{multline}
It follows that
\begin{equation}
\lim_{s \rightarrow 1} \frac{d}{d~\bar{z_2}}\Xi_1(z_1,z_2,s)(z_2-\bar{z_2})^{2s-1}=-\sum_{ad-bc=1}\frac{1}{\mu_{\gamma}(z_1, \bar{z_2})^2}=-\omega_2(z_1, \bar{z_2}). \label{Dz_2Xi}
\end{equation}

\end{itemize}

 By the same argument, it can be shown that the function $\omega_2(z_1,\bar{z_2})$ is holomorphic with respect to $z_1$ and antiholomorphic with respect to $z_2$. Consequently, it is a cusp form of weight 2 in $z_1$  and thus vanishes. Hence, the function $\Xi_1(z_1,z_2)(z_2-\bar{z_2})$ is holomorphic in $z_2$.
   \end{proof}
    Thus by adding the term $-\dfrac{12}{z_1-\bar{z_1}}$ to the series $\Xi_1(z_1,z_2)(z_2-\bar{z_2})$, we get the function
\begin{equation}
\Xi^{\ast}_1(z_1,z_2)=  \Xi_1(z_1,z_2)(z_2-\bar{z_2}) - \frac{12}{z_1-\bar{z_1}},
\end{equation}
which is holomorphic in $z_1$, $z_2$.

\section{The differential form $\Xi^{\ast}(z_1,z_2)~dz_1$}

The function $\Xi_1(z_1,z_2)(z_2-\bar{z_2})$ is transformed under the action of $\Gamma$ like a modular form  of weight 2 with respect to $z_1$, hence
\begin{multline}
\Xi^{\ast}_1(\gamma z_1,z_2) = \Xi_1(\gamma z_1,z_2)(z_2-\bar{z_2})-12~\frac{|cz_1+d|^2}{z_1-\bar{z_1}}=\\
=(cz_1+d)^2~ \Xi^{\ast}_1(z_1,z_2) +12~c(cz_1+d) ~~~\textrm{for}~~~ \gamma=\left(\begin{array}{ccc}
	a&b\\
	c&d\\
	\end{array}\right).
\end{multline}
 Define the  following 1-form on $\Gamma \backslash \mathbb{H}$:
 \begin{equation}
 \Xi^{\ast}_1(z_1,z_2)(z_2-\bar{z_2})~dz_1 = \Xi_1(z_1,z_2)(z_2-\bar{z_2})~dz_1 - \frac{12~dz_1}{z_1-\bar{z_1}}.
 \end{equation}

 Let $\Delta(z)$ denote the modular discriminant: $$\Delta(z) = \frac{1}{1728}(E_4(z)^3-E_6(z)^2), ~~~\textrm{where}~~~E_k(z)=\frac{1}{2}\sum_{\substack{c,d \in \mathbb{Z} \\ (c,d)=1}}\frac{1}{(cz+d)^k} ~~~\textrm{is  the Eisenstein series.}~~~$$ The modular discriminant $\Delta(z)$  is a cusp form of weight 12. It follows in the standard way that $$ \frac{d}{dz_1}\log~\Delta(\gamma z)=\frac{\Delta'(\gamma z)}{\Delta(\gamma z)}=\frac{d}{dz_1}\log~\Delta(z)+12~c(cz+d).$$
 The modular invariant $j(z)$ is defined by $$j(z)=\frac{E_4(z)^3}{\Delta(z)}.$$
   \begin{theorem} One has
\begin{equation}\Xi^{\ast}(z_1,z_2,1)(z_2-\bar{z_2})~dz_1 = d_{z_1} \log~ \left| (j(z_1)-j(z_2))~ \Delta(z_1)\Delta(z_2) \right|^2. \label{T2}
\end{equation}
 \end{theorem}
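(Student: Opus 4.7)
I read the asserted identity as an equation of holomorphic $(1,0)$-forms in $z_1$: since $\log\bar f$ is anti-holomorphic in $z_1$ when $f$ is holomorphic in $z_1$, the right-hand side (as its $(1,0)$-part in $z_1$) equals $\bigl(j'(z_1)/(j(z_1)-j(z_2)) + \Delta'(z_1)/\Delta(z_1)\bigr)\,dz_1$. My strategy is therefore to show that
\[
G(z_1,z_2) \;:=\; \Xi^{\ast}_1(z_1,z_2) \;-\; \frac{j'(z_1)}{j(z_1)-j(z_2)} \;-\; \frac{\Delta'(z_1)}{\Delta(z_1)}
\]
is a holomorphic modular form of weight~$2$ in $z_1$ for $\Gamma = SL_2(\mathbb{Z})$ that vanishes at the cusp $i\infty$; since the space of such cusp forms is zero, this forces $G\equiv 0$.

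\textbf{Step 1 (matching the quasi-modularity).} The previous section gives $\Xi^{\ast}_1(\gamma z_1,z_2) = (cz_1+d)^2\,\Xi^{\ast}_1(z_1,z_2) + 12c(cz_1+d)$. Differentiating $\Delta(\gamma z_1) = (cz_1+d)^{12}\Delta(z_1)$ produces the identical cocycle $12c(cz_1+d)$ for $\Delta'(\gamma z_1)/\Delta(\gamma z_1)$, and the $\Gamma$-invariance of $j$ together with $j'(\gamma z_1) = (cz_1+d)^2 j'(z_1)$ shows that $j'/(j-j(z_2))$ is already a genuine weight-$2$ modular form. Hence the non-modular parts cancel in $G$, and $G\,dz_1$ transforms as a bona-fide modular form of weight~$2$ in $z_1$.

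\textbf{Step 2 (matching poles on~$\mathbb{H}$).} The preceding lemma shows $\Xi^{\ast}_1(z_1,z_2)$ is holomorphic in $z_1$ on~$\mathbb{H}$, $\Delta'/\Delta$ is holomorphic on~$\mathbb{H}$ since $\Delta$ has no zeros there, and $j'(z_1)/(j(z_1)-j(z_2))$ has simple poles at $z_1\in\Gamma z_2$ with residue~$1$ (at generic $z_2$, using $j(z_1)-j(z_2)\sim j'(z_2)(z_1-z_2)$). To identify matching poles of $\Xi^{\ast}_1$ I would use the factorization $\mu_\gamma(z_1,-z_2) = (cz_1+d)(z_2-\gamma z_1)$: a short local computation gives $\mu_{\gamma^{-1}}(z_1,-z_2)\,\mu_{\gamma^{-1}}(z_1,-\bar z_2) \sim (z_1-\gamma z_2)(z_2-\bar z_2)$ as $z_1\to\gamma z_2$, so $\Xi_1(z_1,z_2)(z_2-\bar z_2)$ has a simple pole of residue~$1$ there, while the term $-12/(z_1-\bar z_1)$ is regular. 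The finitely many elliptic orbits of $\Gamma$ are handled by continuity in~$z_2$, and hence $G$ extends holomorphically over~$\mathbb{H}$.

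\textbf{Step 3 (cusp behavior and main obstacle).} As $z_1\to i\infty$ the standard $q$-expansions give $\Delta'/\Delta = 2\pi i + O(q)$ and $j'(z_1)/(j(z_1)-j(z_2)) = -2\pi i + O(q)$, which cancel, so the right-hand side vanishes at the cusp. For $\Xi^{\ast}_1$ I would read off the $z_1\to i\infty$ asymptotic from the Fourier expansion in the proof of Theorem~\ref{T3}: at $s=n=1$ the constant Fourier coefficient arises as the finite limit of~(\ref{A0}) in which the simple poles of $\Gamma(2s-2)$ and $\zeta(4s-3)$ balance, the $c=0$ contribution~(\ref{Xi0}) is summable in closed form via the digamma function, and all nonzero Fourier modes contain the exponentially decaying factor $K_\lambda(2\pi|r|\Im z_1)$; the correction $-12/(z_1-\bar z_1)$ is calibrated to combine with these pieces so that $\Xi^{\ast}_1\to 0$ at the cusp. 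Verifying that these three ingredients assemble exactly to cancel is the main numerical obstacle of the argument; once that is in hand, $G$ is a holomorphic weight-$2$ cusp form on~$\Gamma$, hence identically zero, and the theorem follows.
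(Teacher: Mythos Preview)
Your approach is essentially the paper's own: both arguments show that the difference of the two sides is a holomorphic $\Gamma$-invariant $1$-form (equivalently, a holomorphic weight-$2$ modular form) on the compactified modular curve, hence zero; you simply make explicit the residue matching at $z_1\in\Gamma z_2$ and the cancellation of the quasi-modular cocycle that the paper leaves implicit. One small simplification: since $M_2(SL_2(\mathbb{Z}))=0$, you only need boundedness of $G$ at $i\infty$, not vanishing, so the ``main numerical obstacle'' in Step~3 is lighter than you suggest.
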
 	

 \begin{proof}

  Since the $j$-invariant is a modular function of weight 0 with a simple pole at the cusp and holomorphic on $\mathbb{H}$, then the meromorphic 1-form $d_{z_1} \log~|j(z_1)-j(z_2)|^2$ has the simple poles at the points $z_1=\gamma z_2$ and at the cusps.
Thus the differential forms in the left-hand side and in the right-hand side of (\ref{T2}) have the same poles with the same residues. As a consequence, the difference of this differential forms is a holomorphic 1-form invariant under the modular transformation. Therefore, it is equal to zero.

 \end{proof}

\section{Acknowledgements}
I am very grateful to A. Levin for his great ideas, help and an inspiration.


\bibliographystyle{amsplain}

\end{document}